\documentclass[11pt,a4paper]{amsart}
\usepackage[all]{xy}
\SelectTips{cm}{}
\addtolength{\textwidth}{2cm}
\calclayout

\usepackage{amscd,mathtools,amssymb,amsopn,amsmath,amsthm,graphics,mathrsfs,accents}
\usepackage[colorlinks=true,linkcolor=red,citecolor=blue]{hyperref}
\usepackage[mathscr]{euscript}
\usepackage[usenames]{color}




\newcommand{\rt}{\rightarrow}

\newcommand{\lrt}{\longrightarrow}

\newcommand{\st}{\stackrel}

\newcommand{\Z}{\mathbb{Z} }

\newcommand{\Gp}{\mathsf{Gp} }

\def\Ext{\operatorname{{\mathsf{Ext}}}}

\def\hom{\operatorname{{\mathsf{Hom}}}}

\def\Tor{\operatorname{\mathsf{Tor}}}
\def\hom{\operatorname{\mathsf{Hom}}}
\def\H{\operatorname{\mathsf{H}}}

\def\syz{\mathsf{\Omega}}

\def\Gpd{\operatorname{\mathsf{Gpd}}}

\def\mod{\operatorname{\mathsf{mod}}}

\newtheorem{theorem}{Theorem}[section]

\newtheorem{cor}[theorem]{Corollary}

\newtheorem{lem}[theorem]{Lemma}
\newtheorem{prop}[theorem]{Proposition}

\theoremstyle{definition}
\newtheorem{dfn}[theorem]{Definition}

\newtheorem{rem}[theorem]{Remark}
\newtheorem{s}[theorem]{}

\theoremstyle{plain}

\theoremstyle{definition}

\numberwithin{equation}{section}

\begin{document}

\title[On the vanishing of $\Ext$ and $\Tor$]
{On the vanishing of $\Ext$ and $\Tor$}
\author[Bahlekeh and Salarian]{Abdolnaser Bahlekeh and  Shokrollah Salarian}

\address{Department of Mathematics, Gonbad-Kavous University, Postal Code:4971799151, Gonbad-Kavous, Iran}
\email{bahlekeh@gonbad.ac.ir}

\address{Department of Pure Mathematics, Faculty of Mathematics and Statistics,
University of Isfahan, P.O.Box: 81746-73441, Isfahan,
 Iran and \\ School of Mathematics, Institute for Research in Fundamental Science (IPM), P.O.Box: 19395-5746, Tehran, Iran}
 \email{Salarian@ipm.ir}

\subjclass[2020]{16E30, 13D07, 18G15 }

\keywords{$\Ext$-functor; Gorenstein projective modules;  phantom morphism.}
\thanks{}

\begin{abstract}
This paper contains two theorems concerning the vanishing of natural transformations of (co)homology functors. Precisely, assume that $R$ is a right noetherian ring and $f: M\rt N$ is a morphism of finitely generated right $R$-modules. The first theorem proves that the natural transformation $\Ext^1(f, -)$ vanishes over the category of finitely generated right $R$-modules if and only if $\Tor_1(f, -)$ vanishes over the category of finitely generated left $R$-modules. As a corollary of this result, we establish that $\Ext^1(f, -)$ is epic if and only if $\Tor_1(f, -)$ is monic. The second theorem shows that if $R$ is left and right noetherian and $M, N$ are Gorenstein projective, then the natural transformations $\Tor_1(f, -)$, $\Ext^1(-, f)$ and $\Ext^1(f, -)$ vanish over the category of finitely generated Gorenstein projective modules, simultaneously. This, in particular, yields that over Gorenstein projective modules, the notions of phantom morphisms and $\Ext$-phantom morphisms coincide. Also, it is proved that if $R$ is $n$-Gorenstein, then for any integer $i>n$, the natural transformations $\Ext^{i}(f, -)$, $\Ext^{i}(-, f)$ and $\Tor_{i}(f, -)$ vanish over finitely generated modules,  simultaneously. As an interesting consequence, we show that under the same assumptions, $\Ext^i(-, f)$ is epic (resp. monic) if and only if $\Ext^i(f, -)$ is monic (resp. epic) if and only if $\Tor_i(f, -)$ is epic (resp. monic).
 \end{abstract}
\maketitle


\section{Introduction}
Throughout the paper $R$ is a right noetherian ring, and unless specified otherwise, all modules are assumed to be finitely generated right $R$-modules. We use $\mod R$ (resp. $\mod R^{op}$) to denote the category of all finitely generated right (resp. left) $R$-modules.

Assume that $f: M\rt N$ is a morphism in $\mod R$. This paper deals with the vanishing of the natural transformations $\Tor_1(f, -)$, $\Ext^1(f, -)$ and $\Ext^1(-, f)$. The study of the vanishing of such natural transformations stems from the notion of phantom morphisms.  A given morphism $g: X\rt Y$ of (not necessarily finitely generated) right $R$-modules is called a phantom morphism, if the natural transformation $\Tor_1(g, -):\Tor^R_1(X, -)\rt\Tor^R_1(Y, -)$  is zero, or equivalently, the pullback of any short exact sequence along $g$ is pure exact.  Similarly, $g$ is called an $\Ext$-phantom morphism, if for any finitely presented right $R$-module $L$, the induced morphism of abelian groups $\Ext^1(L, g):\Ext^1_R(L, X)\rt\Ext^1_R(L, Y)$ is zero.
The study of phantom morphisms has its roots in topology in the study of maps between CW-complexes \cite{mc}. A map $h: X\rt Y$ between CW-complexes is said to be a phantom map if its restriction to each skeleton $X_n$ is null homotopic. For the historical information on phantom maps see Remark \ref{s100}.

This paper proves two theorems that compare the vanishing of natural transformations of (co)homology functors, over finitely generated modules. Precisely, our first theorem reads as follows.
\begin{theorem}\label{thm} For a given morphism  $f:M\rt N$ in $\mod R$, the following assertions are equivalent:
\begin{enumerate}\item $\Ext^1(f, -)$ vanishes over $\mod R$. \item  $\Tor_1(f, -)$ vanishes  over $\mod R^{op}$.
\end{enumerate}
\end{theorem}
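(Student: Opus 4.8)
The plan is to show that each of (1) and (2) is equivalent to the single, more transparent statement that $f$ \emph{factors through a projective right $R$-module}, from which their equivalence is immediate. Two of the four implications are routine: if $f=\beta\alpha$ with $\alpha\colon M\rt P$, $\beta\colon P\rt N$ and $P$ projective, then $\Ext^1(f,-)$ factors through $\Ext^1(P,-)=0$ and $\Tor_1(f,-)$ factors through $\Tor_1(P,-)=0$, so both natural transformations vanish identically, in particular over $\mod R$ and over $\mod R^{\op}$. The content therefore lies in the two converses, and I would organise the proof so that a single projective presentation of $N$ drives both of them.

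For (1)$\,\Rightarrow\,$(factors through a projective) I would fix a presentation $0\rt\syz N\rt Q_0\xrightarrow{p} N\rt 0$ with $Q_0$ finitely generated projective; since $R$ is right noetherian, $\syz N=\ker p$ is again finitely generated and hence a legitimate test object. Let $\eta\in\Ext^1(N,\syz N)$ be the class of this sequence. Applying hypothesis (1) at $L=\syz N$ gives $\Ext^1(f,\syz N)=0$, whence $f^{*}\eta=0$ in $\Ext^1(M,\syz N)$. But $f^{*}\eta$ is exactly the class of the pullback $0\rt\syz N\rt E\rt M\rt 0$ of the chosen sequence along $f$; its vanishing means this pullback splits, and a splitting $s\colon M\rt E$ composed with the canonical map $E\rt Q_0$ produces $\sigma\colon M\rt Q_0$ with $p\sigma=f$. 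Thus $f$ factors through the projective $Q_0$.

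The harder converse is (2)$\,\Rightarrow\,$(factors through a projective), and I expect the main obstacle to sit here. First I would upgrade (2) from finitely generated to arbitrary left modules: writing any left $R$-module as a filtered colimit of finitely presented ones and using that $\Tor_1(M,-)$ commutes with filtered colimits, the vanishing of $\Tor_1(f,-)$ over $\mod R^{\op}$ forces $\Tor_1(f,-)=0$ over \emph{all} left $R$-modules, i.e.\ $f$ is a phantom morphism. By the pure-exactness description of phantom morphisms recalled in the introduction, the pullback $0\rt\syz N\rt E\rt M\rt 0$ of the sequence above along $f$ is then \emph{pure} exact. The decisive point is that $M$ is finitely presented, again by right noetherianness: a pure epimorphism onto a finitely presented module splits, because surjectivity of $\hom(M,E)\rt\hom(M,M)$ lets $\id_M$ lift to a section. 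Hence the pullback splits and, exactly as in the previous paragraph, $f$ factors through $Q_0$. Assembling the four implications yields (1)$\,\Leftrightarrow\,$(2), and in fact both are equivalent to the factorization of $f$ through a projective.
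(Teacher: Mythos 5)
Your proof is correct, and for one of the two directions it departs genuinely from the paper. The implication (1) $\Rightarrow$ ($f$ factors through a projective) is exactly the paper's argument: pull back a fixed syzygy sequence $0\rt\syz N\rt Q_0\rt N\rt 0$ along $f$ and use $\Ext^1(f,\syz N)=0$ to split the pullback. The divergence is in the direction starting from (2). The paper does not pass through the factorization statement there at all: it invokes the natural isomorphism $\hom_{\Z}(\Ext^1_R(M,X),\mathbb{Q}/\mathbb{Z})\cong\Tor_1^R(M,\hom_{\Z}(X,\mathbb{Q}/\mathbb{Z}))$, valid for finitely generated modules over a right noetherian ring, upgrades the $\Tor_1$-vanishing from $\mod R^{\op}$ to all left modules by the same direct-limit argument you use, and then reads off $\Ext^1(f,X)=0$ from the vanishing of its character dual. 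You instead use the upgraded $\Tor_1$-vanishing to conclude that $f$ is a phantom morphism, apply the pure-exactness description of phantoms to see that the pullback of the syzygy sequence along $f$ is pure exact, and split it because $M$ is finitely presented (right noetherianness again); your appeal to the splitting of pure epimorphisms onto finitely presented modules is legitimate, via the $\hom(F,-)$-characterization of purity. Both routes are sound. Yours buys the cleaner intermediate statement that each of (1) and (2) is equivalent to $f$ factoring through a projective, which makes the advertised analogy with ``finitely generated flat implies projective'' completely explicit, at the cost of relying on the equivalence between $\Tor_1$-phantomness and purity of pullbacks (itself a short naturality-of-the-connecting-map computation). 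The paper's character-duality argument is more self-contained given the cited isomorphism from Enochs--Jenda, but it only yields the vanishing of $\Ext^1(f,-)$ rather than an explicit factorization.
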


It is worth noting that this theorem can be viewed as the morphism version of \cite[Proposition 3.2.12]{ej} and \cite[Corollary 3.57]{rot}, asserting that over a noetherian ring, a finitely generated flat module is projective. Using Theorem \ref{thm}, we prove that the natural transformation $\Ext^1(f, -)$ is epic if and only if  $\Tor_1(f, -)$ is monic, and also, if $\Tor_1(f, -)$ is epic, then  $\Ext^1(f, -)$ will be monic, see Corollary \ref{cc}. We should point out that the dimension shifting argument implies that all the assertions in Theorem \ref{thm} and Corollary \ref{cc} still remain true for the natural transformations $\Ext^i(f, -)$ and $\Tor_i(f, -)$ for any integer $i\geq 1$, see Corollaries \ref{i1} and \ref{ii1}.

Our second  theorem indicates  that if $f:M\rt N$ is a morphism in $\Gp(R)$, the category of all finitely generated Gorenstein projective right $R$-modules, then the natural transformations $\Tor_1(f, -)$, $\Ext^1(f, -)$ and $\Ext^1(-, f)$ vanish, simultaneously. Indeed, we prove the result below.
\begin{theorem}\label{thm2}Let $R$ be a left and right  noetherian ring and let $f: M\rt N$ be a morphism in $\Gp(R)$. Then the following are equivalent:
\begin{enumerate}\item  $\Tor_1(f, -)=0$ over $\Gp(R^{op})$. \item $\Ext^1(f, -)=0$ over $\Gp(R)$. \item $\Ext^1(-, f)=0$ over $\Gp(R)$.
\end{enumerate}
\end{theorem}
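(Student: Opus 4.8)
The plan is to show that each of the three conditions is equivalent to the single statement that $\underline{f}=0$ in the stable category $\ugp(R)$, i.e.\ that $f$ factors through a projective module. The two main tools are the syzygy/cosyzygy autoequivalence $\syz$ of $\ugp(R)$ together with the dimension-shifting identity $\Ext^1(X,L)\cong\uhom(\syz X,L)$ for $X\in\Gp(R)$, and the duality $(-)^{*}=\hom_R(-,R)\colon\ugp(R)\rt\ugp(R^{\op})$, under which $\underline{f}=0$ if and only if $\underline{f^{*}}=0$. The three ``easy'' implications are immediate: if $f=hg$ with $g\colon M\rt P$, $h\colon P\rt N$ and $P$ projective, then $\Tor_1(f,-)$, $\Ext^1(f,-)$ and $\Ext^1(-,f)$ factor through $\Tor_1(P,-)$, $\Ext^1(P,-)$ and $\Ext^1(-,P)$ respectively; the first two are zero functors since $P$ is flat and projective, and the third vanishes on $\Gp(R)$ because $\Ext^{i}(G,P)=0$ for $G\in\Gp(R)$ and $i>0$. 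Hence $\underline f=0$ forces (1), (2) and (3).

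For the converses to (2) and (3) I would first record the natural isomorphism $\Ext^1(X,L)\cong\uhom(\syz X,L)$, valid for $X\in\Gp(R)$ and any $L$: applying $\hom(-,L)$ to $0\rt\syz X\rt P_0\rt X\rt 0$ identifies $\Ext^1(X,L)$ with $\hom(\syz X,L)$ modulo the maps that extend along $\syz X\ri P_0$, and the vanishing of $\Ext^1(X,Q)$ for projective $Q$ shows that these are exactly the maps factoring through a projective. Under this identification $\Ext^1(f,L)$ is precomposition with $\underline{\syz f}$ and $\Ext^1(L,f)$ is postcomposition with $\underline{f}$. Assuming (2), evaluation at $L=\syz N\in\Gp(R)$ sends $\underline{\id_{\syz N}}$ to $\underline{\syz f}$, whence $\underline{\syz f}=0$ and so $\underline f=0$; assuming (3), evaluation at $L=\syz^{-1}M\in\Gp(R)$ sends $\underline{\id_M}$ to $\underline f$, whence $\underline f=0$. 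Note that restricting the test objects to $\Gp$ costs nothing here, since the ``universal'' test objects $\syz N$ and $\syz^{-1}M$ already lie in $\Gp(R)$; this is why Theorem \ref{thm} is not needed. Thus (2)$\Leftrightarrow\underline f=0\Leftrightarrow$(3).

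It remains to treat (1), which is the heart of the matter. Let $T$ be a complete resolution of $M$ by finitely generated projectives, so that $\Tor_1(M,L)=\H_1(T\otimes_R L)$. The evaluation isomorphism $P\otimes_R L\cong\hom_R(\hom_R(P,R),L)$ for finitely generated projective $P$ yields an isomorphism of complexes $T\otimes_R L\cong\hom_R(T^{*},L)$, where $T^{*}=\hom_R(T,R)$ is a complete resolution of $M^{*}\in\Gp(R^{\op})$. Tracking degrees, this gives a natural isomorphism $\Tor_1(M,L)\cong\uhom_{R^{\op}}(M^{*},\syz L)$, under which $\Tor_1(f,L)$ becomes precomposition with $\underline{f^{*}}\colon M^{*}\rt N^{*}$. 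Granting (1), evaluation at an $L\in\Gp(R^{\op})$ with $\syz L\cong M^{*}$ (for instance $L=\syz^{-1}M^{*}$) sends $\underline{\id_{M^{*}}}$ to $\underline{f^{*}}$, so $\underline{f^{*}}=0$, and the duality gives $\underline f=0$. Combined with the easy direction this proves (1)$\Leftrightarrow\underline f=0$, completing the cycle.

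The main obstacle is the last paragraph. One must verify that $T^{*}$ is again a complete resolution (total acyclicity is preserved by $\hom_R(-,R)$ precisely because $R$ is noetherian on both sides and the terms are finitely generated projective), identify $M^{*}$ as the correct cocycle, and pin down the degree shift so that $\H_1(T\otimes_R L)$ matches the asserted stable $\hom$ group, threading the evaluation isomorphism and its naturality in $f$ through all of this. The conclusion of the Yoneda-type evaluation is robust against an error in the degree shift, since $\syz$ is an autoequivalence and one may always choose $L$ realizing $M^{*}$ as the relevant syzygy power. By contrast, the passage to the stable category and the evaluations used for (2) and (3) are routine once the identification $\Ext^1(X,L)\cong\uhom(\syz X,L)$ is in place.
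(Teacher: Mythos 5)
Your proposal is correct and takes essentially the same route as the paper: every condition is reduced to $f$ factoring through a projective, with (2) and (3) handled by evaluating $\Ext^1$ at the syzygy of $N$ and the cosyzygy of $M$ respectively, exactly as in the paper's Proposition \ref{prop} and the proof of Theorem \ref{thm}. Your treatment of (1) — dualizing a complete resolution via the evaluation isomorphism to identify $\Tor_1(f,-)$ on Gorenstein projectives with stable Hom out of $f^*$, then choosing the test module to realize $M^*$ as the appropriate (co)syzygy and transporting $\underline{f^*}=0$ back through reflexivity — is the same argument the paper runs, merely organized around the complete resolution of $M$ instead of that of the test module.
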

It is known that over the category of Gorenstein projective modules, projective objects coincide with the injective objects. The above theorem, not only ensures the validity of the morphism version of this fact but also shows that in this setting the notions of phantom morphisms and $\Ext$-phantom morphisms coincide. Moreover,  Theorem \ref{thm2} enables us to show that, with the same assumptions, the natural transformations $\Ext^{1}(f, -)$, $\Ext^{1}(-, f)$ and $\Tor_{1}(f, -)$ are equivalence over Gorenstein projective modules, simultaneously, see Corollary \ref{gor}. Also, as an interesting result, we show that if  $R$ is an $n$-Gorenstein ring,  then for a given morphism  $f: M\rt N$ in $\mod R$ and an integer $i>n$, the natural transformations $\Ext^{i}(f, -)$, $\Ext^{i}(-, f)$ and $\Tor_{i}(f, -)$ vanish, simultaneously, see Proposition \ref{co}. As an application of this result, it is proved that, under the same assumptions,
$\Ext^i(-, f)$ is epic (resp. monic) if and only if $\Ext^i(f, -)$ is monic (resp. epic) if and only if $\Tor_i(f, -)$ is epic (resp. monic), see Corollaries \ref{ccc} and \ref{cccc}. These results should be compared with the ones in \cite[Section 2]{mao1}.


\section{Proof of theorems}
This section is devoted to give the proofs of Theorems \ref{thm} and \ref{thm2}, which compare vanishing of the natural transformations $\Tor_1(f, -)$, $\Ext^1(f, -)$ and $\Ext^1(-, f)$, where $f: M\rt N$ is a morphism in $\mod R$. Some consequences of these theorems are explored. It is also proved that if $R$ is an $n$-Gorenstein ring, then for any integer $i>n$, the natural transformations $\Ext^{i}(f, -)$, $\Ext^{i}(-, f)$ and $\Tor_{i}(f, -)$ vanish over (finitely generated) modules, simultaneously.\\

{\bf Proof of Theorem \ref{thm}.} $(1)\Rightarrow (2)$ By our hypothesis, the natural transformation $\Ext^1(f, -)$ vanishes over the category $\mathsf{mod}R$. We would like to show that $\Tor_1(f, -)=0$ on $\mathsf{mod}R^{op}$. This will be done by showing that the morphism $f$ factors through a projective $R$-module.  Take a syzygy sequence $\eta: 0\rt\syz N\rt P\rt N\rt 0$.   By the assumption, the induced morphism of abelian groups $\Ext^1(f, \syz N):\Ext^1_{R}(N, \syz N)\lrt\Ext^1_{R}(M, \syz N)$ is zero. It is known that $\Ext^1_{R}(N, \syz N)$ (resp. $\Ext^1_{R}(M, \syz N)$) can be naturally identified with the equivalence classes of extensions of $\syz N$ by $N$ (resp. $M$), see for example \cite[Theorem 7.30]{rot}. Consequently, the morphism  $\Ext^1(f, \syz N):\Ext^1(N, \syz N)\lrt\Ext^1(M, \syz N)$ sending each object $\epsilon$ to its pull-back along $f$, is zero. This yields that the upper row in the following commutative diagram of $R$-modules, splits.
\[\xymatrix{\eta f:0\ar[r]& \syz N \ar[r] \ar@{=}[d] & T \ar[r]^{\pi'} \ar[d]_{f'} & M \ar[d]_{f}
\ar[r]&0\\ \eta: 0\ar[r]& \syz N \ar[r] & P \ar[r]^{\pi} & N\ar[r]&0.}\]So, taking an $R$-homomorphism $h:M\rt T$ with $\pi'h=1_M$, one deduces that $f=\pi f'h$. This means that $f$ factors through the projective module $P$, as desired. \\
$(2)\Rightarrow (1)$ Suppose that the natural transformation $\Tor_1(f, -)=0$ on $\mathsf{mod} R^{op}$. For a given  $R$-module $X$, we must prove that  $\Ext^1(f, X): \Ext^1_R(N, X)\lrt\Ext^1_R(M, X)$ is zero. Consider the following commutative diagram of abelian groups:
\begin{equation}\label{111}
\xymatrix{ \hom_{\Z}(\Ext^1_R(M, X), Q/{\Z})\ar[d]\ar[r]^{\alpha}& \hom_{\Z}(\Ext^1_R(N, X), Q/{\Z})\ar[d]\\  \Tor_1^R(M, \hom_{\Z}(X, Q/{\Z}))\ar[r]^{\beta}& \Tor_1^R(N, \hom_{\Z}(X, Q/{\Z})).}
\end{equation}
As $M$ and $N$ are finitely generated $R$-modules and $R$ is right noetherian, the vertical morphisms are isomorphism, see \cite[Theorem 3.2.13]{ej}. Since every module is a direct limit of finitely generated modules, and for any $R$-module $F$, the functor $\Tor_1(F, -)$ commutes with direct limits, our assumption forces the morphism $\beta$ to be zero. Consequently, the same will be true for $\alpha$, and so, $\Ext^1(f, X): \Ext^1_R(N, X)\lrt\Ext^1_R(M, X)$ is zero, as well. Thus the proof is finished. \ \ \ \ \ \ \ \  \ \ \ \ \ \ \ \ \ \ \ \ \  \ \ \ \ \ \  \ \ \ \ \ \ \  \ \ \ \ \ \ \ \ \  \ \  \ \ \  \ \ \ \ \ \ \ \ \ \ \ \ \ \ $\Box$\\

Following \cite{fght}, a given morphism of $R$-modules $f: M\rt N$ is said to be a projective morphism provided that for any $R$-module $X$, the morphism of abelian groups $\Ext^1(f, X): \Ext^1_R(N, X)\lrt\Ext^1_R(M, X)$ is zero. An injective morphism is defined dually.

A classical result in homological algebra states that a finitely presented flat module is projective, see for example \cite[Proposition 3.2.12]{ej}. Theorem \ref{thm} says that the morphism version of this result is true, as well.

Assume that $f: M\rt N$ is a morphism in $\mod R$ and $i\geq 2$ an integer. By the dimension shifting argument, we see that $\Ext^i(f, -)\cong\Ext^1(\syz^{i-1}f, -)$ and $\Tor_i(f, -)\cong\Tor_1(\syz^{i-1}f, -)$. These combined with Theorem \ref{thm} yield the result below, which guarantees the validity of the morphism version of the fact that the projective dimension of any module of finite flat dimension is also finite.
\begin{cor}\label{i1}For a given morphism $f:M\rt N$ in $\mod R$ and $i\geq 1$, the following statements are equivalent:
\begin{enumerate}\item $\Ext^i(f, -)$ vanishes over $\mod R$. \item  $\Tor_i(f, -)$ vanishes over $\mod R^{op}$.
\end{enumerate}
\end{cor}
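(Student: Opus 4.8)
The plan is to reduce the case of a general exponent $i$ to the case $i=1$, which is exactly Theorem \ref{thm}, by means of the dimension shifting (syzygy shifting) isomorphisms recorded in the paragraph preceding the statement. First I would fix conventions: choose a projective resolution of each module and write $\syz^j M=\im(P_j\rt P_{j-1})$ for its $j$-th syzygy, noting that since $R$ is right noetherian these syzygies again lie in $\mod R$. Given $f:M\rt N$, a lift of $f$ to a morphism of projective resolutions restricts to morphisms $\syz^{i-1}f:\syz^{i-1}M\rt\syz^{i-1}N$ in $\mod R$.

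The key step is to make the two dimension shifting identities natural in the morphism variable. For any $X\in\mod R$ I would invoke the standard isomorphisms $\Ext^i_R(-,X)\cong\Ext^1_R(\syz^{i-1}(-),X)$ and, for $Y\in\mod R^{op}$, $\Tor^R_i(-,Y)\cong\Tor^R_1(\syz^{i-1}(-),Y)$, obtained by splitting off the leading projective terms of the resolution and using that $\Ext^1$ and $\Tor_1$ vanish on projectives. Because a morphism factoring through a projective module induces the zero map on both $\Ext^1(-,X)$ and $\Tor_1(-,Y)$, the induced map $\Ext^1(\syz^{i-1}f,X)$ (resp. $\Tor_1(\syz^{i-1}f,Y)$) is independent of the chosen lift, and the naturality of the connecting homomorphisms yields commutative squares identifying $\Ext^i(f,X)$ with $\Ext^1(\syz^{i-1}f,X)$ and $\Tor_i(f,Y)$ with $\Tor_1(\syz^{i-1}f,Y)$.

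Granting this, the equivalence is immediate: $\Ext^i(f,-)$ vanishes over $\mod R$ if and only if $\Ext^1(\syz^{i-1}f,-)$ does, and by Theorem \ref{thm} applied to the morphism $\syz^{i-1}f\in\mod R$ the latter is equivalent to the vanishing of $\Tor_1(\syz^{i-1}f,-)$ over $\mod R^{op}$, which in turn is equivalent to the vanishing of $\Tor_i(f,-)$. Chaining these equivalences gives $(1)\Leftrightarrow(2)$.

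The only genuine obstacle is the bookkeeping of the middle paragraph: verifying that the syzygy of a morphism induces a \emph{well-defined} natural transformation on $\Ext^1$ and $\Tor_1$, and that the dimension shifting isomorphisms are natural in $f$, not merely in the objects $M$ and $N$. This is routine once one observes that both $\Ext^1(-,X)$ and $\Tor_1(-,Y)$ annihilate morphisms factoring through projectives, so that everything descends to the stable category $\underline{\mod}R$, on which $\syz$ is an honest functor; the base case $i=1$ requires nothing beyond Theorem \ref{thm} itself.
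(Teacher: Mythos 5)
Your proposal is correct and follows essentially the same route as the paper: reduce to the case $i=1$ via the dimension shifting isomorphisms $\Ext^i(f,-)\cong\Ext^1(\syz^{i-1}f,-)$ and $\Tor_i(f,-)\cong\Tor_1(\syz^{i-1}f,-)$, then invoke Theorem \ref{thm}. The extra care you take in verifying that $\syz^{i-1}f$ is well defined up to maps factoring through projectives (hence on the stable category) is a point the paper leaves implicit, but it is the same argument.
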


\begin{dfn}
Let $f: M\rt N$ be a morphism in $\mod R$ and $i\geq 1$ and integer.  We say that the natural transformation  $\Tor_i(f, -)$ is monic
(resp. epic),  if for any module $X$ in $\mod R^{op}$,  the induced morphism of abelian groups $\Tor_i(f, X): \Tor^R_i(M, X)\lrt\Tor_i^R(N, X)$  is a monomorphism (resp. an epimorphism). Such a morphism is called a $\Tor_i$-monomorphism (resp. $\Tor_i$-epimorphism), see \cite[Definition 2.1]{mao1}. The case for the natural transformations $\Ext^i(f, -)$ and $\Ext^i(-, f)$ is defined similarly.
\end{dfn}

As a consequence of Theorem \ref{thm}, we state the following result.
\begin{cor}\label{cc}For a given morphism  $f: M\rt N$ in $\mod R$, the following assertions hold:
\begin{enumerate}
\item $\Ext^1(f, -)$ is epic if and only if  $\Tor_1(f, -)$ is monic. \item If  $\Tor_1(f, -)$ is epic, then  $\Ext^1(f, -)$ is monic.
\end{enumerate}
\end{cor}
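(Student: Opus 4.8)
The plan is to deduce all three implications from the character-dual functor $(-)^{+}:=\hom_{\Z}(-,\mathbb{Q}/\Z)$, an exact, faithful, contravariant functor interchanging right and left $R$-modules. Since $\mathbb{Q}/\Z$ is an injective cogenerator of abelian groups, a homomorphism $g$ is epic (resp. monic) exactly when $g^{+}$ is monic (resp. epic). First I would apply $(-)^{+}$ to the commutative square \eqref{111} and use that its vertical maps are isomorphisms (\cite[Theorem 3.2.13]{ej}); this identifies $\Ext^1(f,X)^{+}$ naturally with $\Tor_1(f,X^{+})$, so that for every finitely generated right module $X$ the map $\Ext^1(f,X)$ is epic (resp. monic) if and only if $\Tor_1(f,X^{+})$ is monic (resp. epic). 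Dually, from the standard natural isomorphism $\Tor_1(-,Y)^{+}\cong\Ext^1(-,Y^{+})$ (which requires no finiteness, only exactness of $(-)^{+}$) I obtain, for every left module $Y$, that $\Tor_1(f,Y)$ is monic (resp. epic) if and only if $\Ext^1(f,Y^{+})$ is epic (resp. monic).

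Two reductions turn these pointwise equivalences into statements over the whole categories. Since $\Tor_1(f,-)$ commutes with direct limits and filtered colimits are exact, $\Tor_1(f,-)$ is monic (resp. epic) over $\mod R^{op}$ if and only if $\Tor_1(f,Y)$ is monic (resp. epic) for \emph{every} left module $Y$, in particular for every character dual $X^{+}$. On the $\Ext$ side, because $R$ is right noetherian the finitely generated modules $M$ and $N$ are of type $FP_\infty$ (they admit resolutions by finitely generated projectives), so $\Ext^1(M,-)$ and $\Ext^1(N,-)$ also commute with direct limits; hence $\Ext^1(f,-)$ is epic (resp. monic) over $\mod R$ if and only if $\Ext^1(f,W)$ is epic (resp. monic) for \emph{every} right module $W$, in particular for every $W=Y^{+}$.

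With these in place the implications assemble directly. For the equivalence in (1): if $\Tor_1(f,-)$ is monic over $\mod R^{op}$ then $\Tor_1(f,X^{+})$ is monic for all finitely generated $X$, whence $\Ext^1(f,X)$ is epic for all such $X$; conversely, if $\Ext^1(f,-)$ is epic over $\mod R$ then $\Ext^1(f,Y^{+})$ is epic for every finitely generated left $Y$, whence $\Tor_1(f,Y)$ is monic. For (2): if $\Tor_1(f,-)$ is epic over $\mod R^{op}$ then $\Tor_1(f,X^{+})$ is epic for all finitely generated $X$, whence $\Ext^1(f,X)$ is monic, i.e.\ $\Ext^1(f,-)$ is monic over $\mod R$.

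The main obstacle will be the forward direction of (1), namely $\Ext^1(f,-)$ epic $\Rightarrow\Tor_1(f,-)$ monic. Using square \eqref{111} alone only yields that $\Tor_1(f,X^{+})$ is monic for the character duals $X^{+}$, and these do not exhaust the finitely generated left modules. This is exactly what forces the use of the second duality $\Tor_1(-,Y)^{+}\cong\Ext^1(-,Y^{+})$ together with the $FP_\infty$-reduction: I must first upgrade ``$\Ext^1(f,-)$ epic on finitely generated modules'' to ``epic on all modules'' in order to apply it to the typically non-finitely-generated module $Y^{+}$. By contrast, the reverse implication in (1) and the implication (2) are lighter, relying only on the fact that $\Tor$ commutes with direct limits together with the single square \eqref{111}.
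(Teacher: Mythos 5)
Your argument is correct, but for part (1) it takes a genuinely different route from the paper. The paper's proof of (1) builds the auxiliary exact sequence $0\rt K\rt M\oplus P\rt N\rt 0$ with $f'=[f~~\pi]$, observes via the long exact sequences that $\Ext^1(f,-)$ epic (resp.\ $\Tor_1(f,-)$ monic) is equivalent to the \emph{vanishing} of $\Ext^1(h,-)$ (resp.\ $\Tor_1(h,-)$) for the inclusion $h:K\rt M\oplus P$, and then invokes Theorem \ref{thm} for $h$; part (2) is done exactly as you do it, via diagram \eqref{111} and the direct-limit argument for $\Tor$. You instead bypass Theorem \ref{thm} entirely and work with the two character dualities $\Ext^1(f,X)^{+}\cong\Tor_1(f,X^{+})$ and $\Tor_1(f,Y)^{+}\cong\Ext^1(f,Y^{+})$, paying for the hard direction of (1) with the fact that $\Ext^1(M,-)$ and $\Ext^1(N,-)$ commute with filtered colimits because finitely generated modules over a right noetherian ring admit resolutions by finitely generated projectives; that reduction is exactly what lets you evaluate the hypothesis at the non-finitely-generated module $Y^{+}$, and you identified this as the crux correctly. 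Each approach buys something: the paper's reduction to a vanishing statement reuses Theorem \ref{thm} as a black box and is the template later recycled for Corollaries \ref{22} and \ref{222} in the Gorenstein projective setting (where one cannot freely pass to arbitrary test modules); your double-duality argument is more uniform, needs no auxiliary module $K$, and in fact yields slightly more than is claimed --- the same computation $\Tor_1(f,Y)$ epic $\Leftrightarrow$ $\Ext^1(f,Y^{+})$ monic, combined with your $FP_\infty$-reduction, would also give the converse of (2), which the paper does not assert.
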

\begin{proof}
(1) We just prove the `only if' part, the reverse statement can be shown analogously. Take an epimorphism $\pi: P\rt N$ with $P$ a projective $R$-module. So we will have the following commutative diagram with the exact row:
\begin{equation}\label{ee}
\xymatrix{&  & M \ar[r]^{f} \ar[d]^{[1~~0]^t} & N \ar@{=}[d]\\ 0\ar[r]&K\ar[r]^{h}  & M\oplus P \ar[r]^{f'} & N\ar[r]& 0,}
\end{equation}
where $f'=[f~~\pi]$. For a given $R$-module $X$, one may apply the functor $\Ext^1_R(-, X)$ and get the commutative diagram of abelian groups
\[\xymatrix{ \Ext^1_R(N, X)\ar[r]^{\Ext^1(f', X)}\ar@{=}[d] & \Ext^1_R(M\oplus P, X) \ar[r]^{\Ext^1(h, X)} \ar[d]^{\cong} & \Ext^1_R(K, X)\\ \Ext^1_R(N, X)\ar[r]^{\Ext^1(f, X)}  & \Ext^1_R(M, X) & & ,}\]where the upper row is exact. Since $\Ext^1(f, X)$ is an epimorphism, the same is true for $\Ext^1(f', X)$, implying that $\Ext^1(h, X)=0$. Thus, by Theorem \ref{thm}, the natural transformation {$\Tor_1(h, -)$} vanishes over $\mathsf{mod}R^{op}$. Assume that $Y$ is an $R^{op}$-module. Applying the functor $\Tor_1^R(-, Y)$ to the former diagram, yields the following commutative diagram:
\[\xymatrix{  & \Tor_1^R(M, Y) \ar[r]^{\Tor_1(f, Y)} \ar[d]^{\cong} & \Tor_1^R(N, Y) \ar@{=}[d]\\ \Tor_1^R(K, Y)\ar[r]^{\Tor_1(h, Y)}  & \Tor_1^R(M\oplus P, Y) \ar[r]^{\Tor_1(f', Y)} & \Tor_1^R(N, Y) ,}\]where the bottom row is exact. Since $\Tor_1(h, Y)=0$, the morphism $\Tor_1(f', Y)$ will be a monomorphism, and so, the same is true for $\Tor_1(f, Y)$, as needed.\\ (2) Assume that $X\in\mod R$ is given. Since $\hom_{\Z}(X, Q/{\Z})$ is the direct limit of finitely generated $R^{op}$-modules and $\Tor$ functor commutes with direct limit,  our hypothesis ensures that the morphism $\beta$ in Diagram \ref{111}, is an epimorphism. So, the same is true for the morphism $\alpha$, and then,  $\Ext^1(f, X)$ is a monomorphism, as required.
\end{proof}

Assume that $i\geq 1$ is an integer. Replacing $\Ext^1(-, X)$ (resp. $\Tor_1(-, Y))$ with $\Ext^i(-, X)$ (resp. $\Tor_i(-, Y))$ in the proof of Corollary \ref{cc} and making use of Corollary \ref{i1}, would give us the following result.
\begin{cor}\label{ii1} Let $f: M\rt N$ be a  morphism in $\mod R$  and  $i\geq 1$ an integer. Then the following statements hold:
\begin{enumerate}
\item $\Ext^i(f, -)$ is epic if and only if  $\Tor_i(f, -)$ is monic. \item If  $\Tor_i(f, -)$ is epic, then  $\Ext^i(f, -)$ is monic.
\end{enumerate}
\end{cor}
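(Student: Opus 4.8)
The plan is to transport both diagram chases from the proof of Corollary~\ref{cc} from degree $1$ to degree $i$, replacing the single appeal to Theorem~\ref{thm} by an appeal to Corollary~\ref{i1}. The mechanism that makes this legitimate is that for $i\geq 1$ a projective module $P$ satisfies $\Ext^i_R(P,-)=0$ and $\Tor_i^R(P,-)=0$; hence the split inclusion $M\rt M\oplus P$ and the projection induce isomorphisms $\Ext^i(M\oplus P,X)\cong\Ext^i(M,X)$ and $\Tor_i(M,Y)\cong\Tor_i(M\oplus P,Y)$, which play exactly the part of the arrows labelled $\cong$ in that proof.

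For the forward implication of (1) I would keep diagram~\eqref{ee}: pick an epimorphism $\pi\colon P\rt N$ with $P$ projective, put $f'=[f~~\pi]\colon M\oplus P\rt N$, and let $h\colon K\rt M\oplus P$ be its kernel. Applying $\Ext^i_R(-,X)$ to $0\rt K\overset{h}{\rt}M\oplus P\overset{f'}{\rt}N\rt 0$ gives a long exact sequence in which $\Ext^i(f',X)$ differs from $\Ext^i(f,X)$ only by the isomorphism above; since $K$ and $h$ do not depend on $X$, the hypothesis that $\Ext^i(f,X)$ is epic for every $X$ forces $\Ext^i(h,X)=0$ for every $X$, i.e.\ $\Ext^i(h,-)=0$ over $\mod R$. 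Corollary~\ref{i1} then yields $\Tor_i(h,-)=0$ over $\mod R^{op}$, and feeding this into the long exact sequence obtained by applying $\Tor_i^R(-,Y)$ to the same short exact sequence shows $\Tor_i(f',Y)$, and hence $\Tor_i(f,Y)$, is monic. The reverse implication is symmetric.

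For (2) I would build the degree-$i$ analogue of diagram~\eqref{111}. Writing $(-)^{+}=\hom_\Z(-,Q/\Z)$, the required input is the natural isomorphism $\Ext^i_R(M,X)^{+}\cong\Tor_i^R(M,X^{+})$ for finitely generated $M$ over the right noetherian ring $R$, the degree-$i$ form of \cite[Theorem~3.2.13]{ej}; it is obtained by applying the exact functor $(-)^{+}$ to $\hom_R(P_\bullet,X)$ for a resolution $P_\bullet\rt M$ by finitely generated projectives and using the natural identification $\hom_R(P,X)^{+}\cong P\otimes_R X^{+}$ on finitely generated projectives. With this in hand the comparison square commutes; since $X^{+}$ is a direct limit of finitely generated $R^{op}$-modules and $\Tor_i$ commutes with direct limits, the hypothesis that $\Tor_i(f,-)$ is epic makes the lower arrow epic, hence the upper arrow epic, and as $Q/\Z$ is injective this is equivalent to $\Ext^i(f,X)$ being monic.

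I expect the only step needing genuine attention to be the degree-$i$ duality isomorphism in (2), specifically its naturality in $M$, since that is what makes the comparison square commute; this holds because $(-)^{+}$ is exact and therefore commutes with passage to (co)homology of the complex $\hom_R(P_\bullet,X)$, while the identification $\hom_R(P,X)^{+}\cong P\otimes_R X^{+}$ is natural in $P$. Everything else is the formal degree-$i$ copy of the degree-$1$ chase, the projectivity of $P$ supplying the degree-$i$ vanishing that keeps the comparison isomorphisms in force.
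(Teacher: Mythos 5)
Your proposal is correct and follows exactly the route the paper takes: the paper's own proof of Corollary~\ref{ii1} consists precisely of the instruction to rerun the two diagram chases of Corollary~\ref{cc} with $\Ext^i$ and $\Tor_i$ in place of $\Ext^1$ and $\Tor_1$, invoking Corollary~\ref{i1} where Theorem~\ref{thm} was used. You have merely made explicit the two points the paper leaves implicit, namely that projectivity of $P$ supplies the degree-$i$ comparison isomorphisms and that the duality isomorphism of \cite[Theorem~3.2.13]{ej} holds naturally in all degrees.
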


\begin{s}{\sc Gorenstein projective modules.}\\ Assume that $R$ is a left and right noetherian ring.
A given (finitely generated) $R$-module $M$ is called Gorenstein projective, if it is the zeroth  syzygy of an acyclic complex of (finitely generated) projective $R$-modules $\rm{P}^{\bullet}$ that remains exact under $\hom_R(-, Q)$ for any (finitely generated) projective $R$-module $Q$. In this case, the sequence $\rm{P}^{\bullet}$ is called a complete projective resolution of $M$.

The concept of (finitely generated) Gorenstein projective modules, which is a refinement of projective modules, originated in the works of Auslander and Bridger \cite{ab}, who introduced modules of G-dimension zero. Indeed, a finitely generated $R$-module $M$ is said to have G-dimension zero, if it is reflexive (i.e. the natural map $M\rt\hom_{R^{op}}(M^*, R)$ is an isomorphism), and $\Ext^i_R(M,  R)=0=\Ext^i_{R^{op}}(M^*, R)$ for all $ i>0$, where $(-)^*=\hom_R(-, R)$. It is well known that, over the left and right noetherian ring, a finitely generated module is Gorenstein projective if and only if it has G-dimension zero, see \cite[Lemma 2.4]{am} and also \cite[Proposition 10.2.6]{ej}.  Enochs and Jenda \cite{ej1} generalized the notion of Gorenstein projectivity to not necessarily finitely generated modules over arbitrary rings. Gorenstein projective modules have found  important applications in many areas  including commutative algebra, algebraic geometry, singularity theory, and relative homological algebra
\end{s}

In the remainder of the paper, $R$ is assumed to be a left and right noetherian ring. Also, the category of all (finitely generated) Gorenstein projective $R$-modules will be denoted by $\Gp(R)$.

Now we are ready to give the proof of Theorem \ref{thm2}. First, we show that for a given morphism of Gorenstein projective $R$-modules $f: M\rt N$, the natural transformation $\Ext^1(f, -)$ vanishes over $\Gp(R)$ if and only if the same is true for $\Ext^1(-, f)$. This, in particular, examines  the validity of the morphism version of the known fact
that over the category of Gorenstein projective modules, injectives, and projectives are the same.

\begin{prop}\label{prop}Let $f: M\rt N$ be a morphism in $\Gp(R)$. Then the natural transformations $\Ext^1(f, -)$ and $\Ext^1(-, f)$ vanish on  $\Gp(R)$, simultaneously.
\end{prop}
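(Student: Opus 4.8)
The plan is to show that each of the two vanishing conditions is equivalent to the single left–right symmetric statement that $f$ factors through a projective $R$-module; the asserted simultaneity is then immediate. The easy half is that if $f=g\alpha$ with $\alpha:M\rt P$, $g:P\rt N$ and $P$ projective, then both transformations vanish on $\Gp(R)$: for any $X\in\Gp(R)$ the map $\Ext^1(f,X)$ factors through $\Ext^1(P,X)=0$ (as $P$ is projective), while $\Ext^1(X,f)$ factors through $\Ext^1(X,P)$, which also vanishes since $X$ is Gorenstein projective and $P$ is projective -- this is precisely the exactness of the complete resolution of $X$ under $\hom_R(-,P)$. So it remains to prove the two converse implications.

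First I would treat ``$\Ext^1(f,-)=0$ on $\Gp(R)$ implies $f$ factors through a projective'' by reusing the argument of $(1)\Rightarrow(2)$ in the proof of Theorem \ref{thm}. Take a syzygy sequence $\eta:0\rt\syz N\rt P\rt N\rt 0$ with $P$ projective. The one extra point is that $\syz N$ again lies in $\Gp(R)$, because a syzygy of a Gorenstein projective module is Gorenstein projective; hence $\Ext^1(f,\syz N)=0$ by hypothesis, so the pullback of $\eta$ along $f$ splits and its section yields a factorization of $f$ through $P$.

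The remaining implication ``$\Ext^1(-,f)=0$ on $\Gp(R)$ implies $f$ factors through a projective'' is the dual, and I expect its setup to be the main obstacle, since it relies on the feature separating Gorenstein projectives from arbitrary modules: every $M\in\Gp(R)$ fits into a short exact sequence $\mu:0\rt M\rt P\rt M'\rt 0$ with $P$ projective and the cosyzygy $M'$ again in $\Gp(R)$ (the right half of a complete projective resolution of $M$); write $i:M\rt P$ for its inclusion. Regarding $\mu$ as an element of $\Ext^1_R(M',M)$ and applying the hypothesis $\Ext^1(M',f)=0$ shows that the pushout of $\mu$ along $f$, say $0\rt N\rt Q\rt M'\rt 0$, splits; fixing a retraction $r:Q\rt N$ of the map $N\rt Q$ and post-composing the pushout relation $(P\rt Q)\circ i=(N\rt Q)\circ f$ with $r$ yields $g\circ i=f$, where $g:=r\circ(P\rt Q):P\rt N$. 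Hence $f$ factors through the projective $P$. Assembling the three implications proves the equivalence; every step except the cosyzygy construction is a routine diagram chase, the genuinely Gorenstein-projective inputs being the existence of $\mu$ with $M'\in\Gp(R)$ together with the vanishing $\Ext^1(X,Q)=0$ for $X\in\Gp(R)$ and $Q$ projective.
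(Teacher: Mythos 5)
Your proposal is correct and follows essentially the same route as the paper: both reduce each vanishing condition to the statement that $f$ factors through a projective, using the pullback of a syzygy sequence of $N$ for one direction and the pushout of a cosyzygy sequence of $M$ for the other (the paper phrases the easy half as ``projectives are injective objects of $\Gp(R)$,'' which is exactly your computation $\Ext^1(X,P)=0$ for $X\in\Gp(R)$). Your explicit remark that $\syz N$ again lies in $\Gp(R)$, so that the hypothesis on $\Gp(R)$ suffices, is a small point the paper leaves tacit.
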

\begin{proof} First assume that $\Ext^1(f, -)$ vanishes over $\Gp(R)$. Hence, as observed in the proof of Theorem \ref{thm}, $f$ factors through a projective module. Now since projectives are also  injective objects of $\Gp(R)$, one deduces that $\Ext^1(-, f)=0$ over $\Gp(R)$. For the  converse, {one may take}  a syzygy sequence $\beta: 0\rt M\rt Q\rt\syz^{-1}M\rt 0$ in $\Gp(R)$. By our assumption, the morphism $\Ext^1(\syz^{-1}M, f): \Ext^1_R(\syz^{-1}M, M)\rt\Ext^1_R(\syz^{-1}M, N)$ is zero. This leads us to infer that the push-out of $\beta$ along $f$,  $f\beta$, splits. Hence $f$ factors through $Q$, and so,  $\Ext^1(f, -)=0$. Thus the proof is completed.
\end{proof}

{\bf Proof of Theorem \ref{thm2}.} $(1)\Rightarrow (2)$
By the assumption,  $\Tor_1(f, -)=0$ on $\Gp( R^{op})$. We would like to prove that $\Ext^1(f, -)=0$ over $\Gp(R)$. Assume that $X$ is a Gorenstein projective $R^{op}$-module and  ${\rm{P}}^{\bullet}:\cdots\rt  P^{-1}\rt P^0\rt P^{1}\rt \cdots$ is its complete projective resolution. By our hypothesis, the morphism $\H_{-1}(f\otimes_R{\rm{P}}^{\bullet}):\H_{-1}(M\otimes_R{\rm{P}}^{\bullet})\lrt\H_{-1}(N\otimes_R{\rm{P}}^{\bullet})$ is zero. Consider the following commutative diagram:

\[\xymatrix{ M\otimes_R{\rm{P}}^{\bullet}\ar[r]\ar[d]& N\otimes_R{\rm{P}}^{\bullet}\ar[d]\\  \hom_{R^{op}}(M^*, R)\otimes_R{\rm{P}}^{\bullet}\ar[r]\ar[d]& \hom_{R^{op}}(N^*, R)\otimes_R{\rm{P}}^{\bullet}\ar[d] \\  \hom_{R^{op}}(M^*, R\otimes_R{\rm{P}}^{\bullet})\ar[r]\ar[d]& \hom_{R^{op}}(N^*, R\otimes_R{\rm{P}}^{\bullet})\ar[d]\\  \hom_{R^{op}}(M^*, {\rm{P}}^{\bullet})\ar[r] & \hom_{R^{op}}(N^*, {\rm{P}}^{\bullet}),}\]where  $(-)^*:=\hom_{R}(-, R)$. One should note that since $M$ and $N$ are Gorenstein projective $R$-modules, the top vertical morphisms will be isomorphism. Moreover, the middle vertical morphisms are isomorphism, thanks to \cite[Theorem 3.2.14]{ej}. Finally, the bottom vertical maps are trivially isomorphism. This observation together with our hypothesis leads us to infer that
the morphism $\H^{-1}(\hom_{R^{op}}(f^*, {\rm{P}}^{\bullet})):\H^{-1}(\hom_{R^{op}}(M^*, {\rm{P}}^{\bullet}))\lrt\H^{-1}(\hom_{R}(N^*, {\rm{P}}^{\bullet}))$ is zero, as well. Since $M, N\in\Gp(R)$, it is plain that $M^*, N^*\in\Gp(R^{op})$. On the other hand,
since in the category of Gorenstein projective modules, projectives and injective are the same, the latter morphism is identified with the morphism of abelian groups $\Ext^1_{R^{op}}(M^*, \syz^3X)\rt\Ext^1_{R^{op}}(N^*, \syz^3X)$, and in particular, it will be zero. Now, as $X$ was arbitrary, by letting $X:=\syz^{-2}M^*$, we deduce that $\Ext^1(f^*, \syz M^*): \Ext^1_{R^{op}}(M^*, \syz M^*)\rt\Ext^1_{R^{op}}(N^*, \syz M^*)$ is zero. Hence, as we have seen in the proof of Theorem \ref{thm}, the morphism of $R^{op}$-modules  $f^*:N^*\rt M^*$ factors through a projective $R^{op}$-module, say $Q$. In particular, another use of the fact that $M, N\in\Gp(R)$, one concludes that
the morphism $f$ also factors through the projective $R$-module $Q^*$. Consequently, $\Ext^1(f, -)=0$, as derired.\\  $(2)\Rightarrow (3)$  This follows from Proposition \ref{prop}.\\ $(3)\Rightarrow (1)$ By our assumption, $\Ext^1(-, f)=0$ over $\Gp( R)$. According to the proof of Proposition \ref{prop}, the morphism $f$ factors through a projective $R$-module, implying  that the natural transformation  $\Tor_1(f, -)=0$  over $\Gp(R^{op})$. So the proof is finished. \ \ \ \ \ \ \ $\Box$ \\

{
As an interesting consequence of Theorem \ref{thm2}, we include the result below. {First, we give an auxiliary lemma.
\begin{lem}\label{lem}Let $f:M\rt N$ be a morphism in $\Gp(R)$. Then there exists a short exact sequence $0\rt M\st{[f~~u]}\rt N\oplus P\rt C\rt 0$ in $\Gp(R)$, where $P$ is projective.
\end{lem}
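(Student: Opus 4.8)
The plan is to realize the desired sequence as one of the two short exact sequences attached to a suitable pushout square, and to identify the cokernel $C$ through the \emph{other} sequence of that square. First I would invoke $M\in\Gp(R)$ to produce a cosyzygy sequence $\beta: 0\rt M\st{u}\rt P\rt\syz^{-1}M\rt 0$ with $P$ projective and $\syz^{-1}M\in\Gp(R)$ (this is precisely the kind of sequence already used in the proof of Proposition \ref{prop}); it exists because $M$ is the zeroth syzygy of a complete projective resolution, so the relevant structure map embeds $M$ into a projective module with Gorenstein projective cokernel.

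Next I would form the pushout of $u: M\rt P$ along $f: M\rt N$, producing a commutative square with corner object $C$. A pushout square in an abelian category yields two short exact sequences. The first is the defining sequence of the pushout, $0\rt M\st{[f~~u]}\rt N\oplus P\rt C\rt 0$, after absorbing a harmless sign into $u$ (replacing $u$ by $-u$ changes neither its being a monomorphism nor the isomorphism type of $\cok(u)$); here the structure map is $m\mapsto(f(m),u(m))$, which is monic because $u$ is. The second sequence comes from the fact that a pushout transports $\cok(u)$ to the cokernel of the induced map $N\rt C$: since $\cok(u)\cong\syz^{-1}M$ and $N\rt C$ is again monic (the pushout of a monomorphism is a monomorphism), one obtains an exact sequence $0\rt N\rt C\rt\syz^{-1}M\rt 0$.

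It then remains to check that $C\in\Gp(R)$, and this is the crux. One should not expect $C$ to be Gorenstein projective merely because it is the cokernel of a monomorphism between Gorenstein projectives, since $\Gp(R)$ is \emph{not} closed under such cokernels; the point is rather that the second pushout sequence exhibits $C$ as an extension of $\syz^{-1}M$ by $N$. As both $N$ and $\syz^{-1}M$ lie in $\Gp(R)$, and the class of finitely generated Gorenstein projective modules is closed under extensions, we conclude $C\in\Gp(R)$. Finally $N\oplus P\in\Gp(R)$ as well, so the first pushout sequence is exactly the asserted short exact sequence in $\Gp(R)$ with $P$ projective. The only steps needing care are fixing the correct direction of the cosyzygy sequence and verifying that the pushout genuinely yields the extension $0\rt N\rt C\rt\syz^{-1}M\rt 0$ with Gorenstein projective ends; granting the closure of $\Gp(R)$ under extensions, everything else is formal.
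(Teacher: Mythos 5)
Your proposal is correct and is essentially the paper's own argument: the paper also embeds $M$ into a projective $P$ with Gorenstein projective cokernel $L$, forms the same diagram (your pushout square, written out as a $3\times 3$ diagram), and concludes $C\in\Gp(R)$ from the induced extension $0\rt N\rt C\rt L\rt 0$ together with closure of $\Gp(R)$ under extensions. The only difference is presentational — you phrase it via the two exact sequences of a pushout, the paper via an explicit commutative diagram with exact rows and columns.
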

\begin{proof}Since $M$ is Gorenstein projective, we may take a short exact sequence $0\rt M\st{u}\rt P\rt L\rt 0$ in $\mod R$, where $P$ is projective and  $L\in\Gp(R)$. Consider the following commutative diagram with exact rows and columns:{\footnotesize\[\xymatrix{&& 0\ar[d]&0\ar[d]\\&& N\ar@{=}[r]\ar[d]_{[1~~0]^t}&N\ar[d]\\ 0\ar[r]& M \ar[r]^{[f~~u]^t} \ar@{=}[d] & N\oplus P \ar[r] \ar[d]_{[0~~1]} & C\ar[d]
\ar[r]&0\\ 0\ar[r]& M\ar[r]^{u} & P \ar[r]\ar[d] & L\ar[r]\ar[d]&0\\ && 0&0}\]}Since $\Gp(R)$ is closed under extensions,   the right-hand column implies that $C\in\Gp(R)$, and so, the upper row is the desired sequence.
\end{proof}
}

\begin{cor}\label{22}For a given morphism $f: M\rt N$ in $\Gp(R)$, the following are equivalent:
\begin{enumerate}\item  $\Ext^1(-, f)$ is epic over $\Gp(R)$. \item  $\Ext^1(f, -)$ is monic over $\Gp(R)$. \item  $\Tor_1(f, -)$ is  epic over $\Gp(R^{op})$.
\end{enumerate}
\end{cor}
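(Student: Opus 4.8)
The plan is to reduce all three statements, via the short exact sequence furnished by Lemma \ref{lem}, to the three mutually equivalent vanishing conditions of Theorem \ref{thm2} applied to a single auxiliary morphism. First I would invoke Lemma \ref{lem} to obtain a short exact sequence $0\rt M\st{g}\rt N\oplus P\st{p}\rt C\rt 0$ in $\Gp(R)$, where $P$ is projective, $C\in\Gp(R)$, and $g$ is the monomorphism of that lemma (so that the projection $N\oplus P\rt N$ composed with $g$ equals $f$). The crucial point is that $p:N\oplus P\rt C$ is again a morphism in $\Gp(R)$, so Theorem \ref{thm2} is available for it.

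Next I would record the elementary observation that the projective summand $P$ is invisible to the functors in play. Since $P$ is projective and, being projective, is an injective object of $\Gp(R)$, one has $\Ext^1_R(P,X)=0=\Ext^1_R(A,P)$ and $\Tor^R_1(P,Y)=0$ for all $A,X\in\Gp(R)$ and $Y\in\Gp(R^{op})$. Consequently the canonical splittings $\Ext^1_R(N\oplus P,-)\cong\Ext^1_R(N,-)$, $\Ext^1_R(-,N\oplus P)\cong\Ext^1_R(-,N)$ and $\Tor^R_1(N\oplus P,-)\cong\Tor^R_1(N,-)$ identify $\Ext^1(g,-)$, $\Ext^1(-,g)$ and $\Tor_1(g,-)$ with $\Ext^1(f,-)$, $\Ext^1(-,f)$ and $\Tor_1(f,-)$ up to isomorphism. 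In particular the monic and epic properties transfer verbatim between $g$ and $f$.

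Then, feeding the sequence of Lemma \ref{lem} into the long exact sequences of $\Ext^1_R(-,X)$, $\Ext^1_R(A,-)$ and $\Tor^R_1(-,Y)$ and reading off exactness at the middle term $N\oplus P$, I would translate each condition into a vanishing statement for $p$. Concretely, $\Ext^1(f,X)$ is monic if and only if the image of $\Ext^1(p,X)$ is zero, so $(2)$ becomes $\Ext^1(p,-)=0$ over $\Gp(R)$; dually $\Ext^1(A,f)$ is epic if and only if $\Ext^1(A,p)=0$, so $(1)$ becomes $\Ext^1(-,p)=0$ over $\Gp(R)$; and $\Tor_1(f,Y)$ is epic if and only if $\Tor_1(p,Y)=0$, so $(3)$ becomes $\Tor_1(p,-)=0$ over $\Gp(R^{op})$. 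Applying Theorem \ref{thm2} to the morphism $p$ then shows these three vanishing conditions are equivalent, which yields $(1)\Leftrightarrow(2)\Leftrightarrow(3)$.

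The main obstacle is the bookkeeping in the middle two paragraphs: verifying that the projective direct summand drops out cleanly enough that $g$ and $f$ induce literally the same maps up to the splitting isomorphisms, and checking that each monic or epic condition coincides exactly with the vanishing of the adjacent induced map in the relevant long exact sequence. Once this dictionary is in place, no further structural input about Gorenstein projective modules is required, since the entire equivalence is carried by Theorem \ref{thm2}.
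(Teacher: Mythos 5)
Your proposal is correct and follows essentially the same route as the paper's proof: both use the short exact sequence of Lemma \ref{lem}, identify the induced maps of $[f~~u]^t$ with those of $f$ by discarding the projective summand (using that $\Ext^1_R(X,P)=0$ for $X\in\Gp(R)$), translate each monic/epic condition into the vanishing of the induced maps of the cokernel morphism $h:N\oplus P\rt C$ via the long exact sequences, and conclude by applying Theorem \ref{thm2} to $h$.
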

\begin{proof} In view of Lemma \ref{lem},  we will have the following commutative diagram in $\Gp(R)$:
\[\xymatrix{0\ar[r]& M\ar[r]^{f'}\ar@{=}[d] & N\oplus P \ar[r]^{h} \ar[d]^{[1~~0]} & C \ar[r]&0\\ &M\ar[r]^{f}  & N }\]where the row is exact and $f'=[f~~u]^t$. Assume that $X$ is a  Gorenstein projective $R$-module. Applying the functor $\Ext^1_R(X, -)$ to this diagram yields the following commutative diagram
\[\xymatrix{\Ext^1_R(X, M)\ar[r]^{\Ext^1(X,f')}\ar@{=}[d] & \Ext^1(X, N\oplus P) \ar[r]^{\Ext^1(X,h)} \ar[d] & \Ext^1_R(X, C)\\ \Ext^1_R(X, M)\ar[r]^{\Ext^1(X,f)}  & \Ext^1_R(X, N) }\]in which the row is exact. As $X$ is Gorenstein projective, $\Ext^1(X, f)=\Ext^1(X, f')$. So $\Ext^1(X, f)$ is an epimorphism if and only if $\Ext^1(X, h)=0$. Similarly, applying the functor $\Ext^1(-, X)$ to the former diagram yields that $\Ext^1(h, X)=0$ if and only if $\Ext^1(f, X)$ is a monomorphism. Also, for a given $R^{op}$-module $Y$, one applies the functor $\Tor_1(-, Y)$ to the former diagram, and deduce that $\Tor_1(f, Y)$ is an epimorphism if and only if $\Tor_1(h, Y)=0$. On the other hand, by Theorem \ref{thm2}, the natural transformations $\Ext^1(h, -)$, $\Ext^1(-, h)$ and $\Tor_1(h, -)$ vanish, simultaneously. All of these give the desired result.
\end{proof}
{
\begin{cor}\label{222}Let $f: M\rt N$ be a morphism in $\Gp(R)$. Then the following are equivalent:
\begin{enumerate}\item  $\Ext^1(-, f)$ is monic over $\Gp(R)$. \item  $\Ext^1(f, -)$ is epic over $\Gp(R)$. \item  $\Tor_1(f, -)$ is monic over $\Gp(R^{op})$.
\end{enumerate}
\end{cor}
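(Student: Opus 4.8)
The plan is to follow the proof of Corollary \ref{22} almost verbatim, systematically interchanging the words \emph{monic} and \emph{epic}, and replacing the short exact sequence produced by Lemma \ref{lem} with the dual one in which $N$ is realized as a quotient of a projective. Concretely, I would first choose an epimorphism $\pi\colon P\twoheadrightarrow N$ with $P$ projective and form the exact sequence
\[
0\lrt K\st{h}\lrt M\oplus P\st{f'}\lrt N\lrt 0,\qquad f'=[f~~\pi],
\]
which is exactly Diagram \ref{ee} from the proof of Corollary \ref{cc}. The key observation is that here $K$ again lies in $\Gp(R)$: since $M\oplus P$ and $N$ are Gorenstein projective and $\Gp(R)$ is closed under kernels of epimorphisms, $K\in\Gp(R)$. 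Thus $h\colon K\rt M\oplus P$ is a genuine morphism in $\Gp(R)$, and Theorem \ref{thm2} is applicable to it.

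Next I would record the factorization $f=f'\circ[1~~0]^t$ and exploit that the split inclusion $[1~~0]^t\colon M\rt M\oplus P$ induces isomorphisms on $\Ext^1(X,-)$, $\Ext^1(-,X)$ and $\Tor_1(-,Y)$, because $\Ext^1(P,-)=\Ext^1(-,P)=\Tor_1(P,-)=0$ for $P$ projective. Applying $\Ext^1(X,-)$ with $X\in\Gp(R)$ gives the exact row $\Ext^1(X,K)\xrightarrow{\Ext^1(X,h)}\Ext^1(X,M\oplus P)\xrightarrow{\Ext^1(X,f')}\Ext^1(X,N)$, so $\Ext^1(X,f)$ is monic iff $\Ext^1(X,f')$ is monic iff $\Ext^1(X,h)=0$; hence (1) is equivalent to $\Ext^1(-,h)=0$ over $\Gp(R)$. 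Symmetrically, applying $\Ext^1(-,X)$ shows that (2) is equivalent to $\Ext^1(h,-)=0$, and applying $\Tor_1(-,Y)$ with $Y\in\Gp(R^{op})$ shows that (3) is equivalent to $\Tor_1(h,-)=0$ over $\Gp(R^{op})$. Each of these three equivalences is a one-line kernel/image argument read off the relevant exact row, identical in spirit to the chases in Corollary \ref{22}.

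Finally I would invoke Theorem \ref{thm2} applied to the morphism $h\colon K\rt M\oplus P$ of Gorenstein projectives: the natural transformations $\Ext^1(-,h)$, $\Ext^1(h,-)$ and $\Tor_1(h,-)$ vanish simultaneously. Combining this with the three equivalences above yields $(1)\Leftrightarrow(2)\Leftrightarrow(3)$, which completes the proof.

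As for obstacles, the argument is essentially formal once the sequence is in place, and the only point requiring justification beyond routine diagram chasing is the claim $K\in\Gp(R)$, i.e.\ that the kernel of an epimorphism onto a Gorenstein projective module with Gorenstein projective middle term is again Gorenstein projective. I expect this to be the main thing to pin down, but it is standard (the class $\Gp(R)$ is resolving over a left and right noetherian ring); with it in hand the remainder is the exact dual of Corollary \ref{22}, so I anticipate no genuine difficulty.
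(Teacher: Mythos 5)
Your proposal is correct and follows essentially the same route as the paper: the paper's proof of this corollary uses exactly the sequence $0\rt K\st{h}\rt M\oplus P\st{f'}\rt N\rt 0$ from Diagram \ref{ee}, notes that $K\in\Gp(R)$ because $\Gp(R)$ is closed under kernels of epimorphisms, reduces each of (1)--(3) to the vanishing of the corresponding natural transformation on $h$, and concludes by Theorem \ref{thm2}. The only presentational difference is that you spell out the splitting/isomorphism arguments slightly more explicitly.
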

\begin{proof}Consider the following diagram of Gorenstien projective $R$-modules:
\[\xymatrix{&  & M \ar[r]^{f} \ar[d]^{[1~~0]^t} & N \ar@{=}[d]\\ 0\ar[r]&K\ar[r]^{h}  & M\oplus P \ar[r]^{f'} & N\ar[r]& 0,}\]where the row is exact and $P\st{\pi}\rt N$ is an epimorphism with $P$ projective. One should note that since $\Gp(R)$ is closed under the kernel of epimorphisms, $K\in\Gp(R)$. Since for a given object $X\in\Gp(X)$, the isomorphism $\Ext^1_R(X, M)\cong\Ext^1_R(X, M\oplus P)$ holds, applying the functor $\Ext^1_R(X, -)$ to this diagram, leads us to infer that $\Ext^1(X, f)$ is a monomorphism if and only if $\Ext^1(X, h)=0$. Similarly, one deduces that $\Ext^1(f, X)$ is an epimorphism if and only if $\Ext^1(h, X)=0$. Also, for a given Gorenstein projective $R^{op}$-module $Y$, one may apply the functor $\Tor_1^R(-, Y)$ to the above diagram and conclude that $\Tor_1(f, Y)$ is a monomorphism if and only if the morphism$\Tor_1(h, Y)=0$. Now Theorem \ref{thm2} completes the proof.
\end{proof}

Combining the preceding two corollaries, we immediately derive the result below.
\begin{cor}\label{gor} For a given morphism $f: M\rt N$ in $\Gp(R)$, the following are equivalent:
\begin{enumerate}\item  $\Ext^1(-, f)$ is an equivalence over $\Gp(R)$. \item  $\Ext^1(f, -)$ is an equivalence over $\Gp(R)$. \item  $\Tor_1(f, -)$ is an equivalence over $\Gp(R^{op})$.
\end{enumerate}
\end{cor}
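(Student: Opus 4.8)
The plan is to read off Corollary \ref{gor} directly from the two preceding corollaries, once the word \emph{equivalence} is unpacked. For a natural transformation between additive functors valued in abelian groups, being an equivalence means that the induced homomorphism of abelian groups is an isomorphism at every object of the relevant category; and a homomorphism of abelian groups is an isomorphism exactly when it is simultaneously a monomorphism and an epimorphism. Hence, object-wise and therefore as a natural transformation, each of the three conditions in Corollary \ref{gor} decomposes as the conjunction of a ``monic'' assertion and an ``epic'' assertion. Concretely, $\Ext^1(-, f)$ being an equivalence over $\Gp(R)$ means precisely that $\Ext^1(-, f)$ is both epic and monic over $\Gp(R)$, and similarly for $\Ext^1(f, -)$ over $\Gp(R)$ and for $\Tor_1(f, -)$ over $\Gp(R^{op})$.

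With this reformulation in hand, I would simply pair up the two corollaries. Corollary \ref{22} identifies the three ``large'' (epic/monic) conditions
\[\Ext^1(-, f)\text{ is epic} \iff \Ext^1(f, -)\text{ is monic} \iff \Tor_1(f, -)\text{ is epic},\]
while Corollary \ref{222} identifies the complementary three
\[\Ext^1(-, f)\text{ is monic} \iff \Ext^1(f, -)\text{ is epic} \iff \Tor_1(f, -)\text{ is monic}.\]
Taking the conjunction of a condition from the first chain with the matching condition from the second, I would conclude that $\Ext^1(-, f)$ is an equivalence (its epic half supplied by Corollary \ref{22}, its monic half by Corollary \ref{222}) if and only if $\Ext^1(f, -)$ is an equivalence (epic half from Corollary \ref{222}, monic half from Corollary \ref{22}), if and only if $\Tor_1(f, -)$ is an equivalence (epic half from Corollary \ref{22}, monic half from Corollary \ref{222}). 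This establishes the cycle $(1)\iff(2)\iff(3)$.

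I do not anticipate any genuine obstacle here: all of the homological content has already been absorbed into Theorem \ref{thm2}, Lemma \ref{lem}, and the two corollaries, so what remains is pure bookkeeping in matching ``epic'' with ``monic'' across the two lists. The one point deserving a line of care is the remark in the first paragraph, namely that an object-wise monic-and-epic natural transformation of abelian-group-valued functors is in fact a natural isomorphism; this holds because the object-wise inverse isomorphisms automatically assemble into a natural transformation, so no further naturality needs to be checked by hand.
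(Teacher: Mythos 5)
Your proposal is correct and matches the paper's argument exactly: the paper also obtains Corollary \ref{gor} by simply combining Corollaries \ref{22} and \ref{222}, reading ``equivalence'' as the conjunction of the epic and monic conditions established there. Nothing further is needed.
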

}
In what follows, we study the vanishing of natural transformations of (co)homology functors over an $n$-Gorenstein ring $R$, i.e. a left and right noetherian ring with self-injective dimension at most $n$ on both sides for some non-negative integer $n$.

\begin{prop}\label{co}Let $R$ be an $n$-Gorenstein ring and let $f: M\rt N$ be a morphism in $\mod R$. Then for a given integer  $i>n$, the following are equivalent:
\begin{enumerate}\item $\Ext^{i}(f, -)$ vanishes over $\mod R$. \item $\Ext^{i}(-, f)$ vanishes over $\mod R$. \item $\Tor_{i}(f, -)$ vanishes over $\mod R^{op}$.
\end{enumerate}
\end{prop}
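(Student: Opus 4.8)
The plan is to reduce all three vanishing conditions to the corresponding conditions for a single morphism of Gorenstein projective modules and then quote Theorem \ref{thm2}. The structural input is that over an $n$-Gorenstein ring every finitely generated module has Gorenstein projective dimension at most $n$; hence for $i>n$ (so $i-1\geq n$) and any $X\in\mod R$ the syzygy $\syz^{i-1}X$ lies in $\Gp(R)$. Fixing compatible projective resolutions of $M$ and $N$, the map $f$ induces $g:=\syz^{i-1}f\colon\syz^{i-1}M\rt\syz^{i-1}N$, a morphism in $\Gp(R)$. I would then show that each of (1), (2), (3) is equivalent to one of the three conditions $\Ext^1(g,-)=0$, $\Tor_1(g,-)=0$, $\Ext^1(-,g)=0$ over the relevant category of Gorenstein projectives, which are equivalent by Theorem \ref{thm2}.

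For (1) and (3) a single dimension shift suffices. As recalled before Corollary \ref{i1}, there are natural isomorphisms $\Ext^i(f,-)\cong\Ext^1(g,-)$ over $\mod R$ and $\Tor_i(f,-)\cong\Tor_1(g,-)$ over $\mod R^{op}$, so (1) is equivalent to $\Ext^1(g,-)=0$ over $\mod R$ and (3) to $\Tor_1(g,-)=0$ over $\mod R^{op}$. Restriction gives the $\Gp$-versions at once; conversely, vanishing of $\Ext^1(g,-)$ over $\Gp(R)$ already forces $g$ to factor through a projective, since the syzygy $\syz^{i}N$ of the Gorenstein projective module $\syz^{i-1}N$ is again Gorenstein projective and can be fed into the pullback-and-split argument from the proof of Theorem \ref{thm}. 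Thus the $\Gp$-conditions are in fact equivalent to vanishing over all of $\mod R$, and (1)$\Leftrightarrow$(3) is just Corollary \ref{i1}.

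The essential point is (2), where $f$ sits in the second variable and a double shift is needed. First, the first-variable shift $\Ext^i(X,-)\cong\Ext^1(\syz^{i-1}X,-)$, natural in the second argument, shows (2) is equivalent to $\Ext^1(G,f)=0$ for all $G\in\Gp(R)$; here one uses that, modulo projective summands (on which $\Ext^1$ vanishes), the objects $\syz^{i-1}X$ exhaust $\Gp(R)$, because every Gorenstein projective module is the $(i-1)$-st syzygy of one of its cosyzygies. Second, for $G\in\Gp(R)$ the vanishing $\Ext^j(G,\text{projective})=0$ $(j\geq 1)$ gives natural isomorphisms $\Ext^1(G,M)\cong\Ext^i(G,\syz^{i-1}M)\cong\Ext^1(\syz^{i-1}G,\syz^{i-1}M)$, the last by a further first-variable shift; these intertwine $\Ext^1(G,f)$ with $\Ext^1(\syz^{i-1}G,g)$. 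As $G$ runs through $\Gp(R)$ so does $\syz^{i-1}G$ (again up to projectives), whence (2) is equivalent to $\Ext^1(-,g)=0$ over $\Gp(R)$. Theorem \ref{thm2} (or Proposition \ref{prop}) then identifies the three $\Gp$-conditions for $g$, closing the chain (1)$\Leftrightarrow$(2)$\Leftrightarrow$(3).

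I expect the main obstacle to be this last step: justifying the double shift in (2) and, in particular, the two exhaustion-up-to-projectives claims, so that testing against $\{\syz^{i-1}X\}$ is genuinely the same as testing against all Gorenstein projectives. By contrast, the single-shift reductions for (1) and (3), together with the factoring-through-projective argument, are routine once Theorem \ref{thm} and Corollary \ref{i1} are in hand.
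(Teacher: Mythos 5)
Your argument is correct, and its skeleton matches the paper's: both proofs dispose of $(1)\Leftrightarrow(3)$ by Corollary \ref{i1} and reduce $(2)$ to the statement that $g=\syz^{i-1}f$ is a morphism in $\Gp(R)$ factoring through a projective, which is then handled by Proposition \ref{prop}/Theorem \ref{thm2}. Where you genuinely diverge is in the second-variable dimension shift needed to translate condition $(2)$. The paper stays over all of $\mod R$: it invokes $\mathsf{silp}R\leq n$ (finite injective dimension of projectives) to get the natural isomorphism $\Ext^i(-,f)\cong\Ext^{2i-1}(-,\syz^{i-1}f)$ on $\mod R$, and then evaluates at a cosyzygy of $\syz^{i-1}M$ to extract the splitting. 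You instead shift the test object first, replacing $X$ by $\syz^{i-1}X$ and using that these exhaust $\Gp(R)$ up to projective summands, and only then shift the second variable, which is legitimate precisely because $\Ext^{j}(G,P)=0$ for $j\geq 1$, $G\in\Gp(R)$ and $P$ projective. The two routes lean on the two complementary faces of $n$-Gorensteinness ($\mathsf{silp}R\leq n$ versus $\Gpd_RX\leq n$ for all $X$), so neither is cheaper in hypotheses; yours has the merit of confining everything to $\Gp(R)$ so that Theorem \ref{thm2} applies verbatim, at the price of the two exhaustion-up-to-projectives claims you flag. Both of those are fine: every finitely generated Gorenstein projective is, up to projective summands, the $(i-1)$-st syzygy of its $(i-1)$-st cosyzygy taken from a complete resolution, and projective summands (as well as the ambiguity of $\syz^{i-1}f$ by maps factoring through projectives) are killed by the relevant $\Ext^1$ groups, so the intertwining of $\Ext^1(G,f)$ with $\Ext^1(\syz^{i-1}G,g)$ is well defined and natural.
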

\begin{proof}  
 Since the implications $(1) \Leftrightarrow (3)$ are true by Corollary \ref{i1}, it suffices to examine the validity of the implications  $(1)\Leftrightarrow (2)$.\\
$(1) \Rightarrow (2)$  Assume that $\Ext^i(f, -)=0$ over $\mod R$. We would like to show that the same is true for $\Ext^i(-, f)$. {If $i=1$, then $n=0$ and so, every module should be  Gorenstein projective. Thus in this case the result follows from Proposition \ref{prop}. Now assume that $i\geq 2$}. By dimension shifting, one has $\Ext^i(f, -)\cong\Ext^1(\syz^{i-1}f, -)$. {This together with our hypothesis} implies that $\Ext^1(\syz^{i-1}f, -)=0$ over $\mod R$. Thus, as observed in the proof of Theorem \ref{thm}, $\syz^{i-1}f$ factors through a projective module. Since $R$ is $n$-Gorenstein, by using \cite[Theorem 3.1.17]{ej} one infers that the supremum of the injective length (dimension) of projective $R$-modules, denoted by $\mathsf{silp} R$, is at most  $n$.  Hence, for any integer
$m>n$, the natural tranformation $\Ext^m(-, \syz^{i-1}f)$ vanishes over $\mod R$. In particular, $\Ext^{2i-1}(-, \syz^{i-1}f)=0$ over $\mod R$. Now another use of the fact that $\mathsf{silp}R\leq n$, enables us to infer that $\Ext^{2i-1}(-, \syz^{i-1}f)\cong\Ext^i(-, f)$, and so, $\Ext^i(-, f)=0$ over $\mod R$, as desired.\\
$(2) \Rightarrow (1)$ Since $\mathsf{silp}R\leq n$ and  $i>n$, we have $\Ext^i(-, f)\cong\Ext^{2i-1}(-, \syz^{i-1}f)$. This, in conjunction with our assumption, yields that $\Ext^{2i-1}(-, \syz^{i-1}f)=0$ over $\mod R$. Particularly, $\Ext^{2i-1}(\syz^{-2i+1}\syz^{i-1}M, \syz^{i-1}f)=0$. One should note that since $R$ is $n$-Gorenstein and $i>n$, $\syz^{i-1}M\in\Gp(R)$, see \cite[Theorem 10.2.14]{ej}. Consequently, the morphism of abelian groups $\Ext^1(\syz^{i-2}M, \syz^{i-1}f): \Ext^1(\syz^{i-2}M, \syz^{i-1}M)\lrt\Ext^1(\syz^{i-2}M, \syz^{i-1}N)$ is zero.
Thus, as we have seen in the proof of Proposition \ref{prop}, $\syz^{i-1}f$ factors through a projective module. This yields that  $\Ext^1(\syz^{i-1}f, -)=0$  over $\mod R$, and so, the same is true for the natural transformation $\Ext^i(f, -)$. Hence the proof is completed.
\end{proof}

\begin{cor}\label{ccc}Let $R$ be an $n$-Gorenstein ring and $f:M\rt N$ a morphism in $\mod R$. Then  for a given integer  $i>n$, the following assertions are equivalent:
\begin{enumerate}\item  $\Ext^{i}(-, f)$ is epic. \item  $\Ext^{i}(f, -)$ is monic. \item  $\Tor_{i}(f, -)$ is epic.
\end{enumerate}
\end{cor}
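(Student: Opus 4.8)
The plan is to reproduce, one degree at a time, the mechanism behind Corollaries \ref{cc} and \ref{22}: build a single short exact sequence in which $f$ plays the role of the monic map, apply the three functors $\Ext^i(X,-)$, $\Ext^i(-,X)$ and $\Tor_i(-,Y)$ to it, read off each of (1), (2), (3) as the vanishing of a natural transformation attached to one auxiliary morphism $h$, and then invoke Proposition \ref{co} to collapse those three vanishings into one. (I deliberately avoid passing to the syzygy $\syz^{i-1}f$ and quoting Corollary \ref{22}, since the latter tests over $\Gp(R)$ whereas the statement tests over all of $\mod R$, and these do not obviously match.)

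Concretely, I would first embed $M$ into a finitely generated module $L$ of finite injective dimension, via a short exact sequence $0\rt M\st{u}\rt L\rt G\rt 0$ in $\mod R$ with $\id L<\infty$; since $R$ is $n$-Gorenstein this is equivalent to $\pd L<\infty$, and then both dimensions are $\le n$. Setting $f'=[f~~u]^t\colon M\rt N\oplus L$, which is monic because $u$ is, I obtain a short exact sequence $0\rt M\st{f'}\rt N\oplus L\st{h}\rt C\rt 0$ in $\mod R$ together with the projection $[1~~0]\colon N\oplus L\rt N$ satisfying $f=[1~~0]\circ f'$.

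Next I would feed this sequence into the three long exact sequences. Because $i>n$ with $\id L\le n$ and $\pd L\le n$, the summand $L$ is invisible in the relevant degree: $\Ext^i(X,L)=0$ (from $\id L\le n$), while $\Ext^i(L,X)=0$ and $\Tor_i(L,Y)=0$ (from $\pd L\le n$). Hence $N\oplus L$ may be replaced by $N$ throughout, so that $f$ and $f'$ induce the same maps. Reading the three-term exact pieces then gives, exactly as in Corollary \ref{22}: $\Ext^i(-,f)$ is epic over $\mod R$ iff $\Ext^i(-,h)=0$; $\Ext^i(f,-)$ is monic over $\mod R$ iff $\Ext^i(h,-)=0$; and $\Tor_i(f,-)$ is epic over $\mod R^{op}$ iff $\Tor_i(h,-)=0$. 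Since $h$ is a morphism in $\mod R$ and $i>n$, Proposition \ref{co} says these three vanishing conditions for $h$ hold simultaneously, which yields the equivalence of (1), (2) and (3).

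The diagram chase is routine, being identical in spirit to the proofs of Corollaries \ref{cc} and \ref{22} once $L$ is in hand. The main obstacle is the construction of $L$: unlike in Lemma \ref{lem}, where Gorenstein projectivity of $M$ supplied an embedding into a projective module, a general finitely generated $M$ need not embed into a projective, so one must instead embed it into a finitely generated module of finite injective dimension. This is where the $n$-Gorenstein hypothesis is used beyond Proposition \ref{co}: such an embedding is furnished by the special preenvelope of $M$ relative to the complete hereditary cotorsion pair whose right-hand class consists of the modules of finite projective (equivalently, injective) dimension. I would verify that this preenvelope can be taken in $\mod R$ and that $u$ is a monomorphism; after that the argument proceeds mechanically.
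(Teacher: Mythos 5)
Your proposal is correct and takes essentially the same route as the paper: the paper also embeds $M$ via a special preenvelope $0\rt M\st{u}\rt P\rt G\rt 0$ with $\pd P\le n$ (hence $\id P\le n$) and $G\in\Gp(R)$, forms $f'=[f~~u]^t$ with cokernel map $h$, observes that the summand $P$ is invisible to $\Ext^i$ and $\Tor_i$ for $i>n$, and concludes by the diagram chase of Corollary \ref{22} combined with Proposition \ref{co} applied to $h$. The only cosmetic difference is that you describe the middle term as having finite injective dimension, whereas the paper describes it as having finite projective dimension and invokes $\mathsf{silp}R\le n$; over an $n$-Gorenstein ring these classes coincide, so the two formulations agree.
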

\begin{proof}Since $R$ is $n$-Gorenstein, by \cite[Theorem 10.2.14]{ej} we have that $\Gpd_RM\leq n$. So there exists a short exact sequence of $R$-modules $0\rt M\st{u}\rt P\rt G\rt 0$, where $P$ has projective dimension at most $n$ and $G\in\Gp(R)$, see \cite[Proposition 1.2]{et} and also \cite[Lemma 2.17]{cfh}. Consider the following commutative diagram with the exact row:
\[\xymatrix{0\ar[r]& M\ar[r]^{f'}\ar@{=}[d] & N\oplus P \ar[r]^{h} \ar[d]^{[1~~0]} & C \ar[r]&0,\\ &M\ar[r]^{f}  & N }\]where $f'=[f~~u]^t$. Take an arbitrary $R$-module $X$. Since $\mathsf{silp} R\leq n$ and $i>n$, one has the isomorphisms $\Ext^i_R(X, N)\cong\Ext^i_R(X, N\oplus P)$ and $\Ext^i_R(N, X)\cong\Ext^i_R(N\oplus P, X)$. Similarly, for a given $R^{op}$-module $Y$, the isomorphism $\Tor_i^R(N, Y)\cong\Tor_i^R(N\oplus P, Y)$ holds true, as well. Now following the argument given in the proof of Corollary \ref{22} and applying Proposition \ref{co}, would yield the desired result.
\end{proof}

\begin{cor}\label{cccc} Let $R$ be an $n$-Gorenstein ring and $f:M\rt N$  a morphism in $\mod R$. Then for a given integer $i>n$, the following are equivalent:
\begin{enumerate}\item  $\Ext^{i}(-, f)$ is monic. \item $\Ext^{i}(f, -)$ is epic. \item  $\Tor_{i}(f, -)$ is monic.
\end{enumerate}
\end{cor}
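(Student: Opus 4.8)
The plan is to transcribe the proof of Corollary \ref{222} into the $n$-Gorenstein setting, with Proposition \ref{co} playing the role that Theorem \ref{thm2} played there. First I would choose an epimorphism $\pi: P\rt N$ with $P$ a finitely generated projective $R$-module and assemble the commutative diagram with exact row
\[\xymatrix{&  & M \ar[r]^{f} \ar[d]^{[1~~0]^t} & N \ar@{=}[d]\\ 0\ar[r]&K\ar[r]^{h}  & M\oplus P \ar[r]^{f'} & N\ar[r]& 0,}\]
where $f'=[f~~\pi]$ and $K=\ker f'$, which is finitely generated since $R$ is noetherian. In contrast to Corollary \ref{222}, here $M$ and $N$ are arbitrary, so $K$ need not belong to $\Gp(R)$; this causes no difficulty, because the concluding step appeals to Proposition \ref{co}, which is valid for an arbitrary morphism in $\mod R$.

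Next I would record the cancellation isomorphisms for $i>n$. Since $\mathsf{silp}R\leq n$, every projective module has injective dimension at most $n$, whence $\Ext^i_R(X,P)=0$; and as $P$ is projective, hence flat, $\Ext^i_R(P,X)=0$ and $\Tor_i^R(P,Y)=0$ for all $i>0$. Consequently the inclusion $[1~~0]^t$ and the projection onto the first summand induce isomorphisms $\Ext^i_R(X,M)\cong\Ext^i_R(X,M\oplus P)$, $\Ext^i_R(M,X)\cong\Ext^i_R(M\oplus P,X)$ and $\Tor_i^R(M,Y)\cong\Tor_i^R(M\oplus P,Y)$ for every $R$-module $X$ and every $R^{op}$-module $Y$, and under these the maps induced by $f$ are identified with those induced by $f'$.

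The heart of the argument is the long exact sequence bookkeeping, identical in form to the proof of Corollary \ref{222}. Feeding the exact row into $\Ext^i_R(X,-)$, $\Ext^i_R(-,X)$ and $\Tor_i^R(-,Y)$ shows, respectively, that $\Ext^i(X,f')$ is monic precisely when $\Ext^i(X,h)=0$, that $\Ext^i(f',X)$ is epic precisely when $\Ext^i(h,X)=0$, and that $\Tor_i(f',Y)$ is monic precisely when $\Tor_i(h,Y)=0$. Combining these with the isomorphisms of the previous step, conditions (1), (2) and (3) of the corollary translate, respectively, into $\Ext^i(-,h)=0$, $\Ext^i(h,-)=0$ and $\Tor_i(h,-)=0$ over the relevant categories. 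Since these three vanishing statements for $h$ are equivalent by Proposition \ref{co}, so are (1), (2) and (3).

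The argument is essentially mechanical, so I do not anticipate a serious obstacle; the one point requiring care is keeping track of variances. One must verify that the comparison maps occurring in the three long exact sequences really are the images of the maps induced by $f$ under the cancellation isomorphisms, so that the monic/epic assertions for $f$ match the vanishing assertions for $h$, and check that the hypothesis $i>n$ is used exactly where injective dimension at most $n$ of $P$ is needed. Once the diagram and the three functorial long exact sequences are written out, these verifications are routine.
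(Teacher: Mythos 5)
Your proposal is correct and follows essentially the same route as the paper: the paper likewise takes the diagram $0\rt K\st{h}\rt M\oplus P\st{f'}\rt N\rt 0$ from Corollary \ref{222}, uses $\mathsf{silp}R\leq n$ (together with projectivity of $P$) to get the cancellation isomorphisms for $i>n$, and reduces the three monic/epic conditions for $f$ to the three vanishing conditions for $h$, which Proposition \ref{co} makes equivalent. Your write-up merely makes explicit the bookkeeping the paper compresses into ``the verbatim argument of Corollary \ref{222},'' including the correct observation that $K$ need not be Gorenstein projective since Proposition \ref{co} applies to arbitrary morphisms in $\mod R$.
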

\begin{proof} Since $R$ is $n$-Gorenstein, $\mathsf{silp}R\leq n$, and so, for any integer $i>n$ and an $R$-module $X$, the isomorphism $\Ext^i_R(X, M)\cong\Ext^i_R(X, M\oplus P)$ holds.  Consider Diagram \ref{ee} that appeared in the proof of Corollary \ref{222}.
Now the verbatim argument that is used in the proof of Corollary \ref{222}, and applying Proposition \ref{co} would yield the desired result. So the proof is finished.
\end{proof}

Combining Corollaries \ref{ccc} and \ref{cccc} yields the result below.
\begin{cor}Let $R$ be an $n$-Gorenstein ring and $f:M\rt N$  a morphism in $\mod R$. Then for a given integer $i>n$,
the following are equivalent:
\begin{enumerate}\item $\Ext^{i}(f, -)$ is an equivalence. \item $\Ext^{i}(-, f)$ is an equivalence. \item $\Tor_{i}(f, -)$ is an equivalence.
\end{enumerate}
\end{cor}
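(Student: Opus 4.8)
The plan is to read ``equivalence'' as the conjunction of the two already-studied halves and then to feed each half into one of the two preceding corollaries. First I would fix the bookkeeping: for each of the three natural transformations $\Ext^i(f,-)$, $\Ext^i(-,f)$ and $\Tor_i(f,-)$, being an equivalence over the relevant module category means precisely that every component is an isomorphism, which is to say that the transformation is simultaneously monic and epic. Thus each of the conditions $(1)$, $(2)$, $(3)$ splits canonically into its ``monic part'' and its ``epic part''.

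Next I would invoke Corollaries \ref{ccc} and \ref{cccc}, which for $i>n$ already knit these halves together across the three functors. Corollary \ref{ccc} supplies the chain: $\Ext^i(-,f)$ epic $\Leftrightarrow$ $\Ext^i(f,-)$ monic $\Leftrightarrow$ $\Tor_i(f,-)$ epic. Corollary \ref{cccc} supplies the complementary chain: $\Ext^i(-,f)$ monic $\Leftrightarrow$ $\Ext^i(f,-)$ epic $\Leftrightarrow$ $\Tor_i(f,-)$ monic. The single point demanding attention is that the pairing is \emph{crossed}: the monic part of $\Ext^i(f,-)$ is matched with the epic parts of the other two functors, and dually the epic part of $\Ext^i(f,-)$ is matched with the monic parts of the other two.

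Then I would combine. Since $\Ext^i(f,-)$ is an equivalence exactly when it is both monic and epic, Corollary \ref{ccc} rewrites its monic part as ``$\Ext^i(-,f)$ epic and $\Tor_i(f,-)$ epic'', while Corollary \ref{cccc} rewrites its epic part as ``$\Ext^i(-,f)$ monic and $\Tor_i(f,-)$ monic''. Conjoining the two halves, $\Ext^i(f,-)$ being an equivalence is equivalent to $\Ext^i(-,f)$ being both monic and epic, hence an equivalence, and likewise to $\Tor_i(f,-)$ being both monic and epic, hence an equivalence. Because the crossed pairing involves each of the monic and epic conditions on $\Ext^i(-,f)$ and $\Tor_i(f,-)$ exactly once, the asymmetry washes out and all three equivalences fall out at a stroke, giving $(1)\Leftrightarrow(2)\Leftrightarrow(3)$.

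I do not expect any genuine obstacle here, since the whole homological content is already packaged in Corollaries \ref{ccc} and \ref{cccc}. The only thing to be careful about is precisely the crossed matching of monic and epic conditions noted above; once one insists on both halves simultaneously, this crossing is automatically symmetrised and the argument is purely formal.
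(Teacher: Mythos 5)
Your proposal is correct and is exactly the paper's argument: the paper proves this corollary simply by ``combining Corollaries \ref{ccc} and \ref{cccc}'', which is precisely the decomposition into monic and epic halves (with the crossed pairing you correctly identify) that you carry out in detail. No gap.
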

}

We close the paper with the following remark.
\begin{rem}\label{s100}{As mentioned in the introduction, the vanishing of the natural transformations of (co)homology functors have a close connection with the notions of phantom and $\Ext$-phantom morphisms.} The concept of phantom morphisms has its roots in topology in the study of maps between CW-complexes \cite{mc}. A map $f: X \rt Y$ between CW-complexes is said to be a phantom map if its restriction to each skeleton $X_n$ is null homotopic. Later, this notion has been used in various settings of mathematics. In the context of triangulated categories, phantom morphisms were first studied by Neeman \cite{ne}. The notion of phantom morphisms was also developed in the stable category of a finite group ring in a series of works by Benson and Gnacadja \cite{gn, be2, be1, be}.  The definition of a phantom morphism was generalized to the category of $R$-modules over an associative ring $R$ by Herzog \cite{he}. Precisely, a morphism $f: M\rt N$ of (not necessarily finitely generated) right $R$-modules is called a phantom morphism if the natural transformation $\Tor_1(f, -):\Tor^R_1(M, -)\rt\Tor^R_1(N, -)$  is zero, or equivalently, the pullback of any short exact sequence along $f$ is pure exact.  Similarly, $f$ is called an $\Ext$-phantom morphism, if for any finitely presented $R$-module $X$, the induced morphism $\Ext^1(X, f):\Ext^1_R(X, M)\rt\Ext^1_R(X, N)$ is zero.  So Theorem \ref{thm2} reveals that over the category of Gorenstein projective modules, the notions of phantom morphisms and $\Ext$-phantom morphisms coincide.
\end{rem}




\end{document}